\newtheorem{theorem}{Theorem}[section]
\newtheorem{ex}[theorem]{Example}
\newtheorem{pro}[theorem]{Proposition}
\begin{document}

\title{Averaged alternating reflections in geodesic spaces}
\author{Aurora Fern\'andez-Le\'on$^{1}$, Adriana Nicolae$^{2,3}$}
\date{}
\maketitle

\begin{center}
{\footnotesize
$^{1}$Dpto. de An\'alisis Matem\'atico, Universidad de Sevilla, P.O. Box 1160, 41080-Sevilla, Spain
\ \\
$^{2}$ Simion Stoilow Institute of Mathematics of the Romanian Academy, P.O. Box 1-764, 014700 Bucharest, Romania
\ \\
$^{3}$ Department of Mathematics, Babe\c s-Bolyai University, Kog\u alniceanu 1, 400084, Cluj-Napoca, Romania
\ \\
E-mail: aurorafl@us.es, adriana.nicolae@imar.ro
}
\end{center}

\begin{abstract}
We study the nonexpansivity of reflection mappings in geodesic spaces and apply our findings to the averaged alternating reflection algorithm employed in solving the convex feasibility problem for two sets in a nonlinear context. We show that weak convergence results from Hilbert spaces find natural counterparts in spaces of constant curvature. Moreover, in this particular setting, one obtains strong convergence.
\end{abstract}

\section{Introduction}
The convex feasibility problem for two sets consists of finding a point in the intersection of two nonempty closed and convex sets provided such a point exists. This problem finds remarkable applications in applied mathematics and various branches of engineering (see, for example, \cite{BauBor96, Com96, PesCom96, Byr03}) which have motivated many researchers to focus on methods of solving this problem. In Hilbert spaces there exists a wide range of algorithms that use metric projections on the sets in order to obtain sequences of points that converge weakly or in norm (under more restrictive conditions) to a solution of this problem. One of the most famous algorithms is the alternating projection method which was developed by von Neumann \cite{vNeu33} and was recently adapted to the setting of CAT$(0)$ spaces by Ba\v c\' ak, Searston and Sims in \cite{BacSeaSim12}.

Another class of algorithms considered in this respect is based on reflections instead of projections. Given a nonempty closed and convex subset $A$ of a Hilbert space $H$, the reflection of a point $x \in H$ with respect to $A$ is the image of $x$ by the reflection mapping $R_A=2P_A-I$, where $P_A$ stands for the metric projection onto $A$ and $I$ is the identity mapping. In this work we focus on the averaged alternating reflection (AAR) method employed in solving the convex feasibility problem for two sets. Suppose $A$ and $B$ are two nonempty closed and convex subsets of a Hilbert space with nonempty intersection. The AAR method generates the following sequence for a starting point $x_0 \in H$: $x_n = T^n x_0$, where $\displaystyle T = \frac{I+R_A R_B}{2}$. This algorithmic scheme was studied by Bauschke, Combettes and Luke in \cite{bauscom1, bauscom2} not only in connection with the convex feasibility problem, but also for finding a best approximation pair of the sets $A$ and $B$ in case their intersection is empty and such a pair exists. The AAR method was later modified in \cite{bauscom3} in order to solve the problem of finding the projection of a point onto the intersection of two closed and convex sets. In fact, for the convex feasibility problem, this algorithm is a special case of one described by Lions and Mercier in \cite{LioMer79}. One obtains weak convergence of the sequence $(x_n)$ to a fixed point of the mapping $T$ and the projection of this point onto the set $B$ lies in the intersection of $A$ and $B$.

Here we are interested in studying the AAR method in geodesic spaces. However, several difficulties arise when considering this algorithm in a nonlinear setting. First of all one needs to find an appropriate definition for the reflection mapping. In order to guarantee the existence of the reflection of a point in the space, we consider spaces with the geodesic extension property. Moreover, the reflection mapping is not always unique. A second difficulty consists in guaranteeing certain properties of this mapping. In Hilbert spaces, the proof of the convergence of the AAR method relies on the nonexpansivity of the reflection mapping which yields the firm nonexpansivity of the mapping $T$.

In this paper we prove that the reflection mapping is nonexpansive in spaces of constant curvature and justify why it fails to be nonexpansive in the broad setting of CAT$(0)$ spaces. We also analyze the behavior of reflection mappings in slightly more general settings, namely gluings of model spaces. Likewise, we study the convergence of the AAR method in spaces of constant curvature proving strong convergence in this case. Furthermore, we include a rate of asymptotic regularity for the AAR method.

This work is partly motivated by a communication of Ian Searston given during the 10th International Conference on Fixed Point Theory and its Applications where the problem of studying the nonexpansivity of the reflection mapping in geodesic spaces was raised.

\section{Preliminaries}
A metric space $(X,d)$ is said to be a {\it (uniquely) geodesic space} if every two points $x$ and
$y$ of $X$ are joined by a (unique) geodesic, i.e., a map $c \colon [0,l]\subseteq {\mathbb R}\to X$ such that $c(0)=x$,
$c(l)=y$, and $d(c(t),c(t^{\prime}))=|t-t^{\prime}|$ for all $t,t^{\prime} \in [0,l]$. The image $c([0,l])$ of a geodesic forms a {\it geodesic segment} which joins $x$ and $y$ and is not necessarily unique. If no confusion arises, we use $[x,y]$ to denote a geodesic segment
joining $x$ and $y$. A point $z$ in $X$ belongs
to a geodesic segment $[x,y]$ if and only if there exists $t\in [0,1]$ such that $d(x,z)=td(x,y)$ and $d(y,z)=(1-t)d(x,y)$ and we write $z=(1-t)x+ty$ for simplicity. Notice that this point may not be unique. A subset of $X$ is said to be convex if it contains any geodesic segment that joins every two points of it. A {\it geodesic triangle} $\triangle(x,y,z)$ consists of three points $x,y,z \in X$ (the vertices of $\triangle$) and three geodesic segments joining each pair of vertices (the {\it edges} of $\triangle$). A {\it geodesic line} in $X$ is a subset of $X$ isometric to ${\mathbb R}$. A geodesic space has the {\it geodesic extension property} if each geodesic segment is contained in a geodesic line. More on geodesic metric spaces can be found for instance in \cite{brha,papa}.

The {\it metric} $d \colon X \times X \rightarrow {\mathbb R}$ is said to be {\it convex} if for any $x, y, z \in X$ one has
$$d(x, (1 - t)y + tz) \leq (1 - t)d(x, y) + td(x, z) \text{ for all } t \in [0, 1].$$

A geodesic space $(X,d)$ is {\it Busemann convex} (introduced in \cite{bu}) if given any pair of geodesics $c_1 : [0, l_1] \to X$ and $c_2 : [0,l_2] \to X$ one has
$$
d(c_1(tl_1),c_2(tl_2)) \le (1-t)d(c_1(0),c_2(0)) + td(c_1(l_1),c_2(l_2)) \mbox{ for all } t \in [0,1].
$$
It is well-known that Busemann convex spaces are uniquely geodesic and with convex metric.

A very important class of geodesic metric spaces are CAT($k$) spaces (where $k\in \mathbb{R}$), that is, metric spaces of
curvature uniformly bounded above by $k$ in the sense of Gromov. CAT($k$) spaces are defined in terms of comparisons with the model spaces $M_k^n$, which are the complete, simply connected, Riemannian $n$-manifolds of constant sectional curvature $k$. Since these model spaces are of essential importance in this work we give their definition directly as metric spaces and recall some of their properties. For a thorough treatment of such spaces and related topics the reader can check \cite{brha,gr}.

The {\it $n$-dimensional sphere ${\mathbb{S}}^n$} is the set $\left\{x \in {\mathbb{R}}^{n + 1} : (x \mid x) = 1\right\}$, where $(\cdot\mid \cdot)$ is the Euclidean scalar product. Define $d: {\mathbb{S}}^n \times {\mathbb{S}}^n \to \mathbb{R}$ by assigning to each $(x,y) \in {\mathbb{S}}^n \times {\mathbb{S}}^n$ the unique number $d(x,y) \in [0,\pi]$ such that $\cos d(x,y) = (x\mid y)$. Then $({\mathbb{S}}^n,d)$ is a metric space called the spherical space. This is a geodesic space and if $d(x,y) < \pi$ then there is a unique geodesic joining $x$ and $y$. Also, balls of radius smaller than $\pi/2$ are convex.  The {\it spherical law of cosines} states that in a spherical triangle with vertices $x,y,z \in {\mathbb{S}}^n$ and $\gamma$ the spherical angle between the geodesic segments $[x,y]$ and $[x,z]$ we have
\[\cos d(y,z) = \cos d(x,y) \cos d(x,z) + \sin d(x,y) \sin d(x,z) \cos \gamma.\]

For $u,v \in {\mathbb{R}}^{n+1}$, consider the quadratic form given by $\langle u \mid v\rangle  = -u_{n+1}v_{n+1} +\sum_{i=1}^nu_iv_i$. The {\it hyperbolic $n$-space} ${\mathbb{H}}^n$ is the set $\{u=(u_1,u_2,...,u_{n+1})\in {\mathbb{R}}^{n+1}:\; \langle u \mid u\rangle=-1, u_{n+1}>0 \}$. Then ${\mathbb{H}}^n$ is a metric space with the hyperbolic distance $d:{\mathbb{H}}^n \times {\mathbb{H}}^n \to \mathbb{R}$ assigning to each $(x,y) \in {\mathbb{H}}^n \times {\mathbb{H}}^n$ the unique number $d(x,y) \ge 0$ such that $\cosh d(x,y)=-\langle x \mid y\rangle$. The hyperbolic space is uniquely geodesic and all its balls are convex. The {\it hyperbolic law of cosines} states that in a hyperbolic triangle with vertices $x,y,z \in {\mathbb{H}}^n$ and $\gamma$ the hyperbolic angle between the geodesic segments $[x,y]$ and $[x,z]$ we have
\[\cosh d(y,z) = \cosh d(x,y) \cosh d(x,z) - \sinh d(x,y) \sinh d(x,z) \cos \gamma.\]

Let $k\in \mathbb{R}$ and $n \in \mathbb{N}$. The classical {\it model spaces} $M_k^n$ are defined as follows: if $k > 0$, $M_k^n$
is obtained from the spherical space ${\mathbb S}^n$ by multiplying the spherical distance with $1/\sqrt k$; if
$k= 0$, $M_0^n$ is the $n$-dimensional Euclidean space ${\mathbb E}^n$; and if $k < 0$, $M_k^n$ is obtained from
the hyperbolic space ${\mathbb H}^n$  by multiplying the hyperbolic distance with $1/\sqrt{-k}$. The model spaces inherit their geometrical properties from the three Riemaniann manifolds that define them. Thus, if $k < 0$, $M_k^n$ is uniquely geodesic, balls are convex and we have a counterpart of the hyperbolic law of cosines. If $k>0$,
there is a unique geodesic segment joining $x,y\in M_k^n$ if and only if $d(x,y)<\pi/\sqrt{k}$. Moreover, closed balls of
radius smaller than $\pi/(2\sqrt{k})$ are convex and we have a counterpart of the spherical law of cosines. We denote the {\it diameter of $M_k ^n$} by $D_k$. More precisely, for $k > 0$, $D_k = \pi/\sqrt{k}$ and for $k \le 0$, $D_k = \infty$.

Now we briefly introduce CAT($k$) spaces. Let $(X,d)$ be a geodesic space. A {\it $k$-comparison triangle}
for a geodesic triangle $\triangle (x_1,x_2,x_3)$ in $(X,d)$ is a triangle $\triangle(\bar{x}_1,\bar{x}_2,\bar{x}_3)$ in $M_k^2$ such
that $d_{M_k^2}(\bar{x}_i,\bar{x}_j)=d(x_i,x_j)$ for $i,j\in\{1,2,3\}$. For $k$ fixed, $k$-comparison triangles of geodesic triangles (having perimeter less than $2D_k$) always exist and are unique up to isometry.

A geodesic triangle $\triangle$ in $X$ is said to satisfy the {\it CAT$(k)$ inequality} if, given $\bar{\triangle}$ a $k$-comparison
triangle for $\triangle$, for all $x,y\in \triangle$
$$
d(x,y)\le d_{M_k^2}(\bar{x},\bar{y}),
$$
where $\bar{x},\bar{y}\in\bar{\triangle}$ are the comparison points of $x$ and $y$, respectively.

If $k \le 0$, a {\it CAT($k$) space} is a geodesic space for which every geodesic triangle satisfies the CAT($k$) inequality. If $k > 0$, a metric space is called a CAT($k$) space if every two points at distance less than $D_k$ can be joined by a
geodesic and every geodesic triangle having perimeter less than $2D_k$ satisfies the CAT($k$) inequality.

Let $X$ be a metric space and $C \subseteq X$. We define the {\it distance of a point} $z \in X$ to $C$ by $\displaystyle \mbox{dist}(z,C) = \inf_{y \in C}d(z,y)$. The {\it metric projection} $P_C$ onto $C$ is the mapping
\[P_C(z)=\{ y \in C : d(z,y)=\mbox{dist}(z,C)\} \mbox{ for every } z\in X.\]

The following proposition gathers important properties of the metric projection in the setting of CAT($0$) spaces. In \cite{esfe}, a counterpart of this result was given in the setting of CAT($k$) spaces with $k>0$.

\begin{pro}[\cite{brha}, Proposition 2.4, p. 176] \label{projection}
Let $X$ be a complete CAT$(0)$ space, $x\in X$ and $C\subset X$
nonempty closed and convex. Then
the following hold:
\begin{enumerate}
\item The metric projection $P_C(x)$ of $x$ onto $C$ is a
singleton.
\item If $y\in [x,P_C(x)]$, then $P_C(x)=P_C(y)$.
\item If $x\notin C$ and $y\in C$ with $y\neq P_C(x)$ then
$\angle_{P_C(x)}(x,y)\geq \pi/2$.
\item The mapping $P_C$ is a nonexpansive retraction from $X$ onto $C$. Further, the mapping $H : X \times [0,1] \rightarrow X$ associating to $(x,t)$ the point at distance $td(x,P_C(x))$ on the geodesic $[x,P_C(x)]$ is a continuous homotopy from the identity map of $X$ to $P_C$.
\end{enumerate}
\end{pro}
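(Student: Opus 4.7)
The plan is to prove the four items in order, each building on the earlier ones.

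For item (1), the key tool is the CN (Bruhat--Tits) inequality, a defining feature of CAT$(0)$ spaces: for any $z \in X$ and any midpoint $m$ of $y_1, y_2 \in X$, $d(z, m)^2 \le \tfrac12 d(z, y_1)^2 + \tfrac12 d(z, y_2)^2 - \tfrac14 d(y_1, y_2)^2$. Given a minimising sequence $(y_n) \subset C$ for $\mathrm{dist}(x, C)$, convexity of $C$ keeps each midpoint of $y_n, y_m$ inside $C$, so plugging $z = x$ and using $d(x, m) \ge \mathrm{dist}(x, C)$ forces $d(y_n, y_m) \to 0$; completeness then produces a limit in $C$ attaining the infimum, and the same inequality applied to two putative minimisers gives uniqueness. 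For item (2), I would set $p = P_C(x)$, use $y \in [x, p]$ to get $d(x, p) = d(x, y) + d(y, p)$, and for every $z \in C$ combine the triangle inequality $d(y, z) \ge d(x, z) - d(x, y)$ with $d(x, z) \ge d(x, p)$ to deduce $d(y, z) \ge d(y, p)$; uniqueness from (1) then identifies $P_C(y)$ with $p$.

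Item (3) carries the real technical weight, and I would handle it in two steps. First I would prove the stronger ``Euclidean'' inequality $d(x, y)^2 \ge d(x, p)^2 + d(p, y)^2$ for every $y \in C$, which is equivalent to the comparison angle $\bar\angle_p(x, y) \ge \pi/2$. The idea: $y_r := (1 - r) p + r y$ lies in $C$ for $r \in (0, 1]$, so $d(x, y_r) \ge d(x, p)$, while the CAT$(0)$ convexity inequality for $d(x, \cdot)^2$ along $[p, y]$ gives $d(x, y_r)^2 \le (1 - r) d(x, p)^2 + r\, d(x, y)^2 - r(1 - r) d(p, y)^2$; rearranging and letting $r \to 0^+$ yields the claim. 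To lift this to the Alexandrov angle $\angle_p(x, y)$, I would invoke item (2): for $x_s \in [x, p]$ and $y_t \in [p, y]$ one still has $P_C(x_s) = p$ and $y_t \in C$, so the same argument produces $\bar\angle_p(x_s, y_t) \ge \pi/2$ for all $s, t > 0$. In a CAT$(0)$ space the function $(s, t) \mapsto \bar\angle_p(x_s, y_t)$ is monotone non-decreasing in $(s, t)$, and its infimum (equivalently its limit as $s, t \to 0^+$) is precisely $\angle_p(x, y)$, whence $\angle_p(x, y) \ge \pi/2$.

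For item (4), the retraction property is immediate. For nonexpansivity, let $p_i = P_C(x_i)$ and apply the Euclidean form of (3) twice to obtain $d(x_1, p_2)^2 \ge d(x_1, p_1)^2 + d(p_1, p_2)^2$ and $d(x_2, p_1)^2 \ge d(x_2, p_2)^2 + d(p_1, p_2)^2$. I would then combine these with the CAT$(0)$ four-point inequality
\[
d(x_1, p_2)^2 + d(p_1, x_2)^2 \le d(x_1, p_1)^2 + d(p_1, p_2)^2 + d(p_2, x_2)^2 + d(x_1, x_2)^2,
\]
itself a routine consequence of applying CN twice. Substitution of the two angle inequalities into this four-point inequality collapses everything to $d(p_1, p_2)^2 \le d(x_1, x_2)^2$. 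Continuity of the homotopy $H$ finally follows from continuity of $P_C$ (guaranteed by nonexpansivity) together with the standard continuous dependence of $(a, b, t) \mapsto (1 - t) a + t b$ on its arguments in a uniquely geodesic CAT$(0)$ space.

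The step I expect to be the main obstacle is the upgrade in item (3) from the Euclidean comparison inequality to the Alexandrov angle statement: the comparison-angle version is a short calculation, but certifying that the Alexandrov angle itself exceeds $\pi/2$ requires both item (2) (to rescale in the $x$-direction) and the CAT$(0)$ monotonicity of comparison angles under shrinking subsegments, which although standard is the one ingredient the argument genuinely needs beyond the CN inequality.
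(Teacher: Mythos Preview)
The paper does not provide its own proof of this proposition: it is stated in the preliminaries as a known result and attributed to Bridson--Haefliger \cite[Proposition~2.4, p.~176]{brha}, with no argument given. So there is no in-paper proof to compare against.

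That said, your argument is correct and is essentially the standard one. Item~(1) via the CN inequality and a minimising sequence, item~(2) via the triangle inequality, and the first half of item~(3) via the convexity of $d(x,\cdot)^2$ along $[p,y]$ are exactly how the cited reference proceeds. Your upgrade from the comparison angle to the Alexandrov angle using item~(2) together with the CAT$(0)$ monotonicity of comparison angles is the right mechanism. For item~(4), your use of the quadrilateral (Berg--Nikolaev) inequality combined with the two ``Pythagorean'' inequalities from item~(3) is a clean and valid route; the reference instead derives nonexpansivity more directly from convexity of the distance function, but the two arguments are close cousins and both are standard.
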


We give next a concept of convergence in metric spaces. Let $(X,d)$ be a metric space, $(x_n) \subseteq X$ a bounded sequence and $x\in X$. The \emph{asymptotic radius} of $(x_n)$ is given by
$$r((x_n))=\inf \left\{\limsup_{n \to \infty}d(x,x_n): x\in X\right\}$$
and the \emph{asymptotic center} of $(x_n)$ is the set
$$A((x_n))=\left\{x \in X : \limsup_{n \to \infty}d(x,x_n) = r((x_n))\right\}.$$
The sequence $(x_n)$ in $X$ is said to {\it $\Delta$-converge} to $x\in X$ if $x$ is the unique asymptotic center of $(u_n)$ for every subsequence $(u_n)$ of $(x_n)$. This notion of convergence coincides with the weak convergence in Hilbert spaces. For more on this topic and other concepts of weak convergence in metric spaces the reader may see \cite{Jos94, so, kipa, esfe, bijan}.

\section{Main results}
\subsection{Reflecting in geodesic spaces}
Let $(X,d)$ be a uniquely geodesic space with the geodesic extension property. Let $C$ be a nonempty closed and convex subset of $X$ and suppose the metric projection onto $C$ is well-defined and singlevalued. The reflection of a point $x \in X$ with respect to $C$ can be any point $z$ in a geodesic line containing the geodesic segment $[x,P_Cx]$ for which $\displaystyle P_Cx= \frac{x+z}{2}$.

If $X$ has no bifurcating geodesics, then geodesics can be extended in a unique way so the reflection of a point is uniquely determined. Thus, in such a setting, the reflection mapping $R_C$ which assigns to each point its reflection is well-defined and singlevalued. Recall that two geodesics bifurcate if they have a common endpoint and coincide on an interval, but one is not an extension of the other. Note however that one may also define singlevalued reflection mappings in spaces failing this property.

We study next the reflection mapping in a nonlinear setting. By \cite[Example 22.1]{gore}, we know that in the complex Hilbert ball the reflection mapping fails to be nonexpansive which yields that in CAT$(0)$ spaces this mapping is not necessarily nonexpansive in general. However, in the particular setting of model spaces, the reflection mapping proves to be nonexpansive. This shows that in model spaces, as in the case of Hilbert spaces, the metric projection is an averaged mapping with constant $1/2$.

\begin{pro}\label{model}
Let $k \in \mathbb{R}$ and $n \in \mathbb{N}$. Suppose $C$ is a nonempty closed and convex subset of $M_k^n$ and $x,y\in M_k^n$ such that $\emph{dist}(x,C)$, $\emph{dist}(y,C)< D_k /2$. Then,
$$d(R_Cx,R_Cy)\leq d(x,y).$$
\end{pro}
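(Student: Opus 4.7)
Set $p = P_C x$, $q = P_C y$, $x' = R_C x$, $y' = R_C y$, $r_1 = d(x,p)$ and $r_2 = d(y,q)$. The hypothesis $r_1, r_2 < D_k/2$, together with the analogue of Proposition \ref{projection} for $M_k^n$ (in the case $k > 0$ this is exactly where the half-diameter bound is needed, see \cite{esfe}), guarantees that $p$ and $q$ are singletons and that the reflections $x', y'$ are well defined. Because $x'$ lies on the geodesic extension of $[x,p]$ beyond $p$ at distance $r_1$, the supplementary-angle identity $\angle_p(x, z) + \angle_p(x', z) = \pi$ holds for every $z \neq p$, and analogously at $q$ for $y'$. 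The projection-angle property supplies the two \emph{obtuseness inputs} $\angle_p(x, q) \geq \pi/2$ and $\angle_q(y, p) \geq \pi/2$, so that $\angle_p(x', q) \leq \pi/2$ and $\angle_q(y', p) \leq \pi/2$.

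The case $k = 0$ is the classical Hilbert-space fact, a direct consequence of firm nonexpansivity of $P_C$. For $k \neq 0$, after the usual scaling to $k = \pm 1$, I set $\varphi(t) = \cos t$ when $k > 0$ and $\varphi(t) = \cosh t$ when $k < 0$, and apply the $M_k^2$ law of cosines in two ``hinge pairs'' of triangles sharing an edge with supplementary angles at the common vertex. The pair $\triangle(x,p,y)$, $\triangle(x',p,y)$ shares $[p,y]$; adding the two identities cancels the cosine-of-angle term and yields
\[
\varphi(d(x,y)) + \varphi(d(x',y)) = 2\,\varphi(r_1)\,\varphi(d(p,y)).
\]
The pair $\triangle(x',q,y)$, $\triangle(x',q,y')$ shares $[x',q]$ and similarly gives
\[
\varphi(d(x',y)) + \varphi(d(x',y')) = 2\,\varphi(r_2)\,\varphi(d(x',q)).
\]
Subtracting eliminates $\varphi(d(x',y))$ and produces the master identity
\[
\varphi(d(x',y')) - \varphi(d(x,y)) = 2\bigl[\varphi(r_2)\varphi(d(x',q)) - \varphi(r_1)\varphi(d(p,y))\bigr].
\]

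It then suffices to show that the bracket on the right has the sign that forces $d(x',y') \leq d(x,y)$: nonnegative when $k > 0$ (so that the decreasing $\varphi$ reverses to the claim) and nonpositive when $k < 0$ (so that the increasing $\varphi$ does). This is supplied by one last pair of applications of the law of cosines. The triangle $\triangle(x',p,q)$, combined with $\angle_p(x',q) = \pi - \angle_p(x,q)$ and $\angle_p(x,q) \geq \pi/2$, yields a one-sided comparison of $\varphi(d(x',q))$ with $\varphi(r_1)\varphi(d(p,q))$; the triangle $\triangle(y,q,p)$, using $\angle_q(y,p) \geq \pi/2$, yields a one-sided comparison of $\varphi(d(p,y))$ with $\varphi(r_2)\varphi(d(p,q))$. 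The two comparisons point in opposite directions, and multiplying by the strictly positive factors $\varphi(r_2)$ and $\varphi(r_1)$ respectively chains them through the common quantity $\varphi(r_1)\varphi(r_2)\varphi(d(p,q))$, giving the bracket the required sign. The only delicate point — and the main obstacle — is precisely this strict positivity of $\varphi(r_1)$ and $\varphi(r_2)$: in the hyperbolic case it is automatic since $\cosh > 0$, but in the spherical case it is equivalent to the standing hypothesis $r_1, r_2 < D_k/2$, which explains the appearance of this bound in the statement. No further geometric input is needed, since each invocation of the law of cosines takes place in a totally geodesic copy of $M_k^2$ containing the relevant triangle.
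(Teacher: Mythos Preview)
Your proof is correct and shares the paper's core mechanics: the same midpoint identities obtained by adding the law of cosines over supplementary hinges (your two displayed identities are exactly the paper's equations (\ref{1}) and (\ref{4})), and the same obtuseness inputs from the projection-angle property. The paper, however, derives all four symmetric midpoint identities, pairs them with the simpler comparisons $d(x',q)\le d(x,q)$ and $d(y',p)\le d(y,p)$, and finishes by contradiction: assuming $d(x',y')>d(x,y)$ forces both $d(x,y')<d(x',y)$ and $d(x,y')>d(x',y)$. You instead subtract only two of the identities and bound the resulting bracket directly by pivoting through the common quantity $\varphi(r_1)\varphi(r_2)\varphi(d(p,q))$, using one further law-of-cosines estimate on each side. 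Your organization is a bit more economical and makes transparent exactly where the hypothesis $r_i<D_k/2$ enters (positivity of $\varphi(r_i)$ in the spherical case); the paper's four-identity contradiction argument is the same computation in a different, more symmetric packaging.
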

\begin{proof}
First we consider the case $k=-1$. For simplicity, denote $c_x=P_C x$, $c_y=P_Cy$, $x^\prime=R_Cx$ and $y^\prime=R_Cy$. Let $\gamma = \angle_{c_x}(x,c_y)$ and $\gamma^\prime=\angle_{c_y}(y,c_x)$. Notice that, by Proposition \ref{projection}, $\gamma, \gamma^\prime \geq \pi/2$. Consider the geodesic triangles $\triangle(x,c_x,c_y)$ and $\triangle(x^\prime,c_x,c_y)$. By the hyperbolic cosine law we have that
$$
\cosh d(x,c_y)=\cosh d(x,c_x)  \cosh d(c_x,c_y)  - \sinh d(x,c_x)  \sinh d(c_x,c_y) \cos \gamma
$$
and
$$\cosh d(x^\prime,c_y)=\cosh d(x^\prime,c_x) \cosh d(c_x,c_y) - \sinh d(x^\prime,c_x) \sinh  d(c_x,c_y) \cos (\pi-\gamma).$$
Since $d(x,c_x)=d(x^\prime, c_x)$ and $\cos (\pi-\gamma)\geq 0$, we get $\cosh d(x,c_y) \geq \cosh d(x^\prime,c_y)$ and thus
$$d(x,c_y)\geq d(x^\prime,c_y).$$
Similarly we get $d(y,c_x) \geq d(y^\prime,c_x)$.\\
Consider now the geodesic triangles $\triangle(x,c_x,y)$ and $\triangle(x^\prime, c_x, y)$ and denote $\beta = \angle_{c_x}(x,y)$. Applying again the hyperbolic cosine law we obtain that
$$
\cosh d(x,y)=\cosh d(x,c_x)  \cosh d(c_x,y)  - \sinh  d(x,c_x)  \sinh d(c_x,y) \cos \beta
$$
and
$$\cosh d(x^\prime,y)=\cosh d(x^\prime,c_x) \cosh d(c_x,y) - \sinh d(x^\prime,c_x) \sinh  d(c_x,y) \cos (\pi-\beta).$$
Adding these two equalities we get that
\begin{equation}\label{1}
\cosh d(x,y)+\cosh d(x^\prime,y)=2 \cosh d(c_x, x) \cosh d(c_x,y).
\end{equation}
In a similar way we have that
\begin{equation}\label{2}
\cosh d(x^\prime,y^\prime)+\cosh d(x,y^\prime)=2 \cosh d(c_x, x) \cosh d(c_x,y^\prime),
\end{equation}
\begin{equation}\label{3}
\cosh d(x,y)+\cosh d(x,y^\prime)=2 \cosh d(c_y, x) \cosh d(c_y,y),
\end{equation}
\begin{equation}\label{4}
\cosh d(x^\prime,y^\prime)+\cosh d(x^\prime,y)=2 \cosh d(c_y, y) \cosh d(c_y,x^\prime).
\end{equation}
Suppose now that $d(x^\prime, y^\prime)> d(x,y)$. From (\ref{1}) and (\ref{2}),
\begin{align*}
\cosh d(x^\prime,y^\prime)+\cosh d(x,y^\prime)& = 2 \cosh d(c_x, x) \cosh d(c_x,y^\prime) \\
& \leq 2 \cosh d(c_x, x) \cosh d(c_x,y)\\
& < \cosh d(x^\prime,y^\prime)+\cosh d(x^\prime,y),
\end{align*}
which implies that $d(x,y^\prime) < d(x^\prime,y)$.
By using (\ref{3}) and (\ref{4}) in a similar way, we get that $d(x,y^\prime) > d(x^\prime,y)$, which is a contradiction and thus the result follows. The proof in any other model space $M_k^n$ follows similar patterns by using the corresponding law of cosines of each space.
\end{proof}

The above result also holds in the equivalent infinite dimensional spherical and hyperbolic spaces defined using points in $\ell_2$ (see, for instance, ${\mathbb{H}}^{\infty}$ in \cite{EspPia12} and \cite[Example 3.4]{esfe}).

In the sequel, we analyze the reflection mapping in some particular CAT($k$) spaces. More precisely, we consider the gluing of two model spaces $M_k^n$ and $M_{k^\prime}^m$ (for more details on gluings see \cite[Chapter I.5]{brha}). Note that two spaces of constant curvature can only be glued through geodesic lines, geodesic segments or singletons. First we consider the gluing of two model spaces by a singleton. Although the reflection mapping is not uniquely defined in this type of gluing spaces, there exists a natural way of defining it such that it is nonexpansive.

\begin{pro} \label{prop_glu_point}
 Let $k,k^\prime \in \mathbb{R}$, $k \ne k^\prime$ and $m,n \in \mathbb{N}$. Consider the gluing space $(X,d)$ of $M_k^n$ and $M_{k^\prime}^m$ through a point $\theta$. Suppose $B$ is a nonempty closed and convex subset of $X$ and $x\in X$ such that $\emph{dist}(x,B)<\min\{D_k,D_{k^\prime}\}/2$. We define the reflection of a point with respect to $B$ in $X$ in the following way:
\begin{itemize}
\item [(1)] If $\theta \in B$, $R_Bx= R_{B\cap M_j^p} x$, where $M_j^p \in \{M_k^n, M_{k^\prime}^m\}$ is the model space that contains $x$ and $R_{B\cap M_j^p}$ is the reflection in the model space $M_j^p$ with respect to $B \cap M_j^p$.
\item [(2)] If $\theta \notin B$, observe that $B$ is strictly contained in just one of the two model spaces that define the glued space $X$. Suppose $B \subseteq M_k^n$. Then,
\begin{itemize}
\item [(2.1)] if $x \in M_k^n$, $R_B x$ is the point obtained by reflecting in $M_k^n$ with respect to $B$;
\item [(2.2)] if $x \in M_{k^\prime}^m$, the reflection is uniquely defined since $d(\theta, P_B\theta)>0$. Specifically, $R_B x$ is the point in the geodesic line in $M_k^n$ containing $[\theta, P_B \theta]$ such that $d(P_B \theta, R_B x)=d(x,P_B\theta)$ and $d(R_Bx,\theta)> d(R_Bx,P_B \theta)$.
\end{itemize}
\end{itemize}
Then $R_B$ is nonexpansive.
\end{pro}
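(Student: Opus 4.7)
The plan is a case analysis on (a) whether $\theta \in B$ and (b) which of the two model spaces contains each of $x$ and $y$. Inside a single model space, with both reflections staying there, Proposition~\ref{model} applies verbatim; the real content lies in the cases where $x$ and $y$ straddle the two components, and in particular in the case $\theta \notin B$ where the definition of the reflection mixes the two model spaces.

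First I would treat the case $\theta \in B$. If $x$ and $y$ lie in the same component $M_j^p$, then $R_B x, R_B y \in M_j^p$ by definition, and since $M_j^p$ embeds isometrically in $X$, Proposition~\ref{model} (applied to $B \cap M_j^p$) immediately gives $d(R_B x, R_B y) \le d(x,y)$. If instead $x \in M_k^n$ and $y \in M_{k'}^m$, then $R_B x \in M_k^n$ and $R_B y \in M_{k'}^m$, so the distance in $X$ between these pairs factors through $\theta$. Since $\theta$ lies in $B$ in each component it is a fixed point of each reflection, and Proposition~\ref{model} yields $d(R_B x,\theta) \le d(x,\theta)$ and $d(\theta, R_B y) \le d(\theta, y)$; summing gives the claim.

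The substantive case is $\theta \notin B$, so by convexity $B$ sits inside a single component, say $B \subseteq M_k^n$. The subcase $x, y \in M_k^n$ is immediate from Proposition~\ref{model}. For the mixed subcase $x \in M_k^n$, $y \in M_{k'}^m$, I would introduce a shadow $\phi(y) \in M_k^n$: the point on the geodesic line in $M_k^n$ through $P_B\theta$ and $\theta$ lying on the opposite side of $\theta$ from $P_B\theta$, at distance $d(\theta,y)$ from $\theta$. The crucial identity to establish is $R_B y = \widetilde{R}_B\,\phi(y)$, where $\widetilde{R}_B$ denotes the ordinary $M_k^n$-reflection with respect to $B$. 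Granting this, Proposition~\ref{model} in $M_k^n$ together with the triangle inequality yields
\[
d(R_B x, R_B y) = d_{M_k^n}\bigl(\widetilde{R}_B x, \widetilde{R}_B \phi(y)\bigr) \le d_{M_k^n}(x, \phi(y)) \le d(x,\theta) + d(\theta, y) = d_X(x,y).
\]
The remaining subcase $x, y \in M_{k'}^m$ is analogous: both $R_B x$ and $R_B y$ equal the $M_k^n$-reflections of the shadows $\phi(x), \phi(y)$, which lie on a common ray emanating from $\theta$, so $d_{M_k^n}(\phi(x),\phi(y)) = |d(\theta,x) - d(\theta,y)| \le d(x,y)$, and one concludes again via Proposition~\ref{model}.

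The delicate step I expect is the identity $R_B y = \widetilde{R}_B\,\phi(y)$ in the case $\theta \notin B$; this reduces to showing $P_B \phi(y) = P_B \theta$ inside $M_k^n$. I would deduce it from Proposition~\ref{projection}(3) (together with its $k>0$ counterpart from~\cite{esfe}): since $\phi(y)$ lies on the ray from $P_B\theta$ through $\theta$ extended past $\theta$, for every $z \in B \setminus \{P_B\theta\}$ the angle $\angle_{P_B\theta}(\phi(y), z)$ equals $\angle_{P_B\theta}(\theta, z) \ge \pi/2$, which forces $P_B \phi(y) = P_B\theta$. Once this equality is in hand, the $M_k^n$-reflection places $\widetilde{R}_B\phi(y)$ on the line through $\theta$ and $P_B\theta$, on the side of $P_B\theta$ opposite $\theta$, at distance $d(\phi(y), P_B\theta) = d(\theta,y) + d(\theta, P_B\theta) = d(y, P_B\theta)$, matching the definition of $R_B y$ in~(2.2) point by point. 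The hypotheses $\mathrm{dist}(\cdot,B) < \min\{D_k, D_{k'}\}/2$ keep every invocation of Proposition~\ref{model} and every geodesic extension inside its regime of validity.
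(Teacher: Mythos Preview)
Your proposal is correct and follows essentially the same case analysis and argument as the paper; in particular, your ``shadow'' point $\phi(y)$ in case~(2) is exactly the point the paper constructs as $z = R_{P_B\theta} R_B y$, and both proofs then invoke Proposition~\ref{model} in $M_k^n$ together with the triangle inequality through $\theta$. The only cosmetic differences are that the paper re-derives $d(R_Bx,\theta)\le d(x,\theta)$ in case~(1) via the angle condition rather than citing Proposition~\ref{model} with $\theta$ as a fixed point, and in the subcase $x,y\in M_{k'}^m$ the paper observes directly that $R_Bx,R_By$ lie on a common ray (so $d(R_Bx,R_By)=|d(x,\theta)-d(y,\theta)|$) without passing through Proposition~\ref{model}.
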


\begin{proof}
(1) Suppose $\theta \in B$. We prove that $R_B$ defined as in (1) is nonexpansive. Let $x,y \in X$. The cases $x,y \in M_k^n$ or $x,y \in M_{k^\prime}^m$ are immediate since $R_B$ reduces to the reflection in model spaces. Suppose now $x\in M_k^n$ and $y\in M_{k^\prime}^m$. Then, $R_Bx= R_{B\cap M_k^n} x$ and $R_By= R_{B\cap M_{k^\prime}^m} y$. Consider the geodesic triangles $\triangle_1=\triangle(R_Bx,P_Bx,\theta)$ and $\triangle_2=\triangle(x,P_Bx,\theta)$. Notice that $\triangle_1,\triangle_2 \subseteq M_k^n$. For simplicity, we consider $k=-1$ (the proof for $k\neq -1$ follows similar patterns). Since $\angle_{P_Bx}(x,\theta) \geq \pi/2$, similarly as in the proof of Proposition \ref{model}, $d(\theta,x)\geq d(\theta, R_Bx)$. Likewise, we can see that $d(\theta, y)\geq d(\theta, R_B y)$.
Therefore,
$$d(R_Bx,R_By)=d(R_Bx,\theta)+d(\theta,R_By)\leq d(x,\theta)+d(y,\theta)=d(x,y),$$
and the conclusion follows.\\
(2) Next we suppose that $\theta \notin B$ and $B \subseteq M_k^n$. We prove that $R_B$ defined as in (2) is nonexpansive. Let $x,y \in X$. As we mentioned before, the case $x,y \in M_k^n$ is immediate. Let $x,y \in M_{k^\prime}^m$. Since $\text{dist}(\theta,B)>0$, we have that $P_Bx=P_By=P_B \theta$ and so
$$d(R_Bx,R_By)=|d(x,P_Bx)-d(y,P_By)|=|d(x,\theta)-d(y,\theta)|\leq d(x,y).$$
Finally, suppose $x\in M_k^n$, $y\in M_{k^\prime}^m$. As in the previous case, $P_B y=P_B \theta$. Consider $z=R_{P_B\theta} R_By \in M_k^n$, where $R_{P_B\theta}$ is the reflection mapping with respect to the point $P_B\theta$ inside $M_k^n$. By definition we have $P_B \theta \in [R_B y, z]$. Since in a model space geodesics do not bifurcate, the geodesic line containing $[R_B y, P_B \theta]$ coincides with the geodesic line containing $[R_B y, \theta]$. Let $c$ be this geodesic line. Then, by definition of the reflection, $z \in c$. Moreover, $d(R_By, P_B \theta)=d(z,P_B\theta) > d(P_B \theta, \theta)$ and $d(\theta, R_By) > d(R_By, P_B\theta)$. Thus, $\theta \in [P_B\theta, z]$. As a consequence,
$$d(P_B \theta, y)=d(P_B \theta, R_B y)=d(P_B\theta,z)=d(P_B \theta, \theta)+d(\theta, z).$$
On the other hand, $P_Bz=P_B\theta$ and therefore $R_Bz=R_By$. Finally, since $d(x,z)\leq d(x,\theta)+d(z,\theta)=d(x,y)-d(y,\theta)+d(z,\theta) = d(x,y)$ and the reflection in $M_k^n$ is nonexpansive, we get
$$d(R_Bx, R_By)=d(R_Bx,R_Bz)\leq d(x,z) \leq d(x,y),$$
and the result follows.
\end{proof}

Next we see that when gluing two model spaces through a geodesic segment or a geodesic line, we cannot define the reflection in the gluing in such a way that it is nonexpansive with respect to every nonempty closed and convex set. To illustrate this fact we consider the particular gluing of ${\mathbb{H}}^2$ and ${\mathbb R}^2$.

\begin{ex} Let $c \colon {\mathbb{R}} \rightarrow {\mathbb R}^2$ be the geodesic line in the plane defined by $c(t)=(t,0)$ and $c^\prime \colon {\mathbb{R}} \rightarrow {\mathbb H}^2$ the geodesic line in the hyperbolic plane defined by $c^\prime (t)= (0, \sinh t, \cosh t)$. We consider the gluing of these two model spaces by the correspondence given by the isometry $j \colon c({\mathbb{R}}) \to c^{\prime} ({\mathbb R})$ defined as $j(c(t))=c^\prime(t)$ for every $t \in \mathbb{R}$.

\noindent Let $x=(- \emph{arccosh} \sqrt{2},h)$ and $y=(\emph{arccosh} \sqrt{2},h)$, where $h>0$. Take $p=(\sqrt{z^2-1},0,z)$, with $z>1$ and $B=\{p\}$.

\noindent We see that there exist values of $h$ and $z$ such that $d(R_Bx,R_By)> d(x,y)=2 \emph{arccosh} \sqrt{2}$.
Notice that $$d(y,p)=\inf_{t \in {\mathbb R}} \{ d(y,c(t))+d(c^\prime(t),p)\}= \inf_{t \in {\mathbb R}} \bigg{\{}\sqrt{(\emph{arccosh} \sqrt{2}-t)^2 + h^2} + \emph{arccosh} (z \cosh t)\bigg{\}}.$$
Let $f(t)=d(y,c(t))+d(c^\prime(t),p)$. Then,
\[f^\prime(t)=\frac{z \sinh t}{\sqrt{z^2\cosh ^2 t-1}}-\frac{\emph{arccosh}\sqrt{2}-t}{\sqrt{(\emph{arccosh}\sqrt{2}-t)^2+h^2}}.\]
Since $\displaystyle \lim_{t \rightarrow \infty} f^\prime(t)=2$, $\displaystyle \lim_{t \rightarrow -\infty} f^\prime(t)=-2$ and $f^\prime$ is increasing in $t$, then there exists only one value $t_0 \in \mathbb{R}$ that gives the infimum. Moreover, since $f^\prime(0)<0$ and $\displaystyle f^\prime(\emph{arccosh}\sqrt{2})>0$, then $t_0 \in (0, \emph{arccosh} \sqrt{2})$.\\
By using the symmetry of the space, we see that if $t_0$ is the value where the previous infimum is attained, then $-t_0$ is the value that gives $d(x,p)$. Moreover, $d(x,p)=d(y,p)$.\\
Let $d=d(R_Bx,R_By)$ and $\gamma$ be the hyperbolic angle between the segments $[p,c(t_0)]$ and $[p, c(-t_0)]$. By the hyperbolic cosine law, we have
$$\cosh 2t_0=(\cosh d(p,c^\prime(t_0)))^2-(\sinh d(p,c^\prime(t_0)))^2 \cos \gamma$$ and
$$\cosh d = (\cosh d(x,p))^2-(\sinh d(x,p))^2 \cos \gamma.$$
If we consider for instance $h=1/100$ and $1<z=(\sqrt{2}+15)/16<\sqrt{2}$, then
$t_0\simeq 0.8392$ and $\cos \gamma \simeq -0.7991$. Consequently, $\cosh d\simeq 3.7363 > \cosh d(x,y)=3$.\\
The underlying idea in this construction is the fact that geodesics in ${\mathbb H}^2$ diverge faster than in ${\mathbb R}^2$.

\end{ex}
Since the previous construction can be easily adapted to the case of gluings through segments, we also obtain the same conclusion for this type of gluings.

\subsection{Convergence results}
We begin this section by defining the {\it averaged alternating reflection (AAR) method} in the setting of geodesic spaces. Let $(X,d)$ be a uniquely geodesic space. Given $A$ and $B$ two nonempty closed and convex subsets of $X$, suppose that the reflection mappings $R_A$ and $R_B$ are well-defined and singlevalued. Consider the mapping $T  \colon X \rightarrow X$ defined by $\displaystyle T=\frac{I+R_AR_B}{2}$. The AAR method generates the following sequence for a starting point $x_0 \in X$: $x_n = T^n x_0$ for every $n \ge 1$.

We prove next the convergence of the AAR method in the setting of model spaces with $k \leq 0$ which is an analogue of a weak convergence result in Hilbert spaces \cite[Fact 5.9]{bauscom1}. Note that we obtain strong convergence since the model spaces are proper metric spaces (see also \cite[Proposition 4.4]{bijan}; recall that a space is proper if every closed ball is compact).

\begin{theorem} \label{th_conv_neg}
Let $k \le 0$ and $n \in \mathbb{N}$. Suppose $A$ and $B$ are two nonempty closed and convex subsets of $M_k^n$ with $A \cap B \neq \emptyset$. Let $x_0 \in M_k^n$ and $(x_n)$ be the sequence starting at $x_0$ generated by the $AAR$ method. Then
\begin{itemize}
\item [(1)]$(x_n)$ converges to some fixed point $x$ of the mapping $T$ and $P_Bx \in A \cap B$.
\item [(2)] The ``shadow'' sequence $(P_Bx_n)$ is convergent and its limit belongs to $A \cap B$.
\end{itemize}
\end{theorem}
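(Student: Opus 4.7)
The plan is to follow the Krasnoselskii--Mann scheme, substituting the CN inequality for the Hilbert space identity used in \cite{bauscom1} and exploiting properness of $M_k^n$ (for $k\le 0$) to upgrade the customary weak/$\Delta$-convergence to strong convergence.

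First, since $D_k=\infty$ when $k\le 0$, Proposition \ref{model} guarantees that $R_A$ and $R_B$ are nonexpansive on all of $M_k^n$, hence so is $U:=R_AR_B$. In a uniquely geodesic space the equation $Tx=x$ says that the midpoint of $x$ and $Ux$ equals $x$, which forces $Ux=x$, so $\mathrm{Fix}(T)=\mathrm{Fix}(U)$. For any $p\in A\cap B$ one has $P_Ap=p=P_Bp$, and hence by definition of the reflection $R_Ap=p$ and $R_Bp=p$; consequently $A\cap B\subseteq\mathrm{Fix}(T)$, which is in particular nonempty.

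Next I would combine Fej\'er monotonicity and asymptotic regularity into a single step via the CN inequality. Fix $p\in A\cap B$. Since $x_{n+1}$ is the midpoint of $x_n$ and $Ux_n$, and since $d(Ux_n,p)\le d(x_n,p)$ by nonexpansivity, the CN inequality yields
\[
d(x_{n+1},p)^2\le \tfrac12 d(x_n,p)^2+\tfrac12 d(Ux_n,p)^2-\tfrac14 d(x_n,Ux_n)^2\le d(x_n,p)^2-\tfrac14 d(x_n,Ux_n)^2.
\]
Summing gives $\sum_{n\ge 0} d(x_n,Ux_n)^2\le 4\,d(x_0,p)^2<\infty$, so $d(x_n,Ux_n)\to 0$, and $(x_n)$ is bounded and Fej\'er monotone with respect to $\mathrm{Fix}(T)$.

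Because $M_k^n$ is proper for $k\le 0$, the bounded sequence $(x_n)$ has a convergent subsequence $x_{n_j}\to x$. Continuity of $U$ together with $d(x_{n_j},Ux_{n_j})\to 0$ yields $Ux=x$, so $x\in\mathrm{Fix}(T)$; Fej\'er monotonicity with respect to $\{x\}$ then upgrades this to $d(x_n,x)\to 0$. To finish (1), set $y=R_Bx$; from $R_AR_Bx=Ux=x$ one has $R_Ay=x$, so by the definition of the two reflections $P_Bx$ is the midpoint of $x$ and $y$, while $P_Ay$ is the midpoint of $y$ and $x$, and these midpoints coincide, giving $P_Bx=P_Ay\in A\cap B$. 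Part (2) follows immediately: $P_B$ is nonexpansive by Proposition \ref{projection}, so $P_Bx_n\to P_Bx\in A\cap B$. The main obstacle is the asymptotic regularity step, which in Hilbert space is a one-line consequence of firm nonexpansivity of $T$ but in CAT(0) requires the CN telescoping displayed above; once it is in hand, identifying subsequential limits as fixed points is painless thanks to properness, which simultaneously provides strong rather than merely $\Delta$-convergence.
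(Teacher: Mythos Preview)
Your proof is correct and follows the same overall arc as the paper: nonexpansivity of $R_AR_B$ from Proposition~\ref{model}, asymptotic regularity of the iterates, convergence to a fixed point using that $M_k^n$ is proper, and the midpoint identification $P_Bx=P_A(R_Bx)$ for the feasibility conclusion.

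The execution differs in two places. For asymptotic regularity the paper simply invokes \cite[Proposition~2]{goki*} (nonexpansive self-maps with bounded orbits in convex-metric spaces are asymptotically regular), whereas you unpack this into a direct CN-inequality telescoping; your argument is more self-contained and even gives the summability bound $\sum_n d(x_n,Ux_n)^2\le 4\,d(x_0,p)^2$, at the cost of using the full CAT$(0)$ structure rather than just metric convexity. For convergence, the paper goes through $\Delta$-convergence via \cite[Proposition~6.3]{AriLeuLop12} and then notes that $\Delta$-convergence coincides with strong convergence in proper spaces, while you bypass $\Delta$-convergence entirely: extract a strongly convergent subsequence by properness, identify its limit as a fixed point via continuity of $U$, and upgrade to full convergence by Fej\'er monotonicity. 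Your route is shorter and avoids the asymptotic-center machinery, but the paper's route would survive in the non-proper setting $\mathbb{H}^\infty$ (yielding $\Delta$-convergence there), which yours would not without reintroducing that machinery.
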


\begin{proof}
(1) By Proposition \ref{model}, $R_A R_B$ is nonexpansive which yields that $T$ is also nonexpansive because $M_k^n$ with $k\leq 0$ is Busemann convex. In addition, $\text{Fix}(T) \ne \emptyset$ since $A \cap B \ne \emptyset$. Thus, all orbits of $T$ are bounded. Using \cite[Proposition 2]{goki*} it follows that $T$ is asymptotically regular. Applying \cite[Proposition 6.3]{AriLeuLop12}, we get that $(x_n)$ $\Delta-$converges to a fixed point of $T$, which implies that $(x_n)$ converges to a fixed point of $T$. Let $x$ be the limit of $(x_n)$. Since $x=Tx$ it follows that $x=R_AR_Bx$. Moreover, because $\displaystyle P_B x=\frac{x + R_Bx}{2}$,
$$P_Bx=\frac{R_AR_Bx + R_Bx}{2}=P_A R_Bx \in A.$$
Hence, $P_B x \in A \cap B$.\\
(2) is immediate since the metric projection $P_B$ is continuous in CAT$(0)$ spaces.
\end{proof}

We remark that a similar result holds in the gluing spaces described in Proposition \ref{prop_glu_point} (when $k, k' \le 0$). In $\mathbb{H}^{\infty}$ one obtains an analogous $\Delta$-convergence result.

In model spaces $M_k^n$ with $k>0$ the mapping $T$ used in defining the AAR method may fail to be nonexpansive as the following example shows.

\begin{ex} Consider the points $a=(1,0,0)$, $b=(\sqrt{2}/2, \sqrt{2}/2,0)$ and $p=(0,0,1)$ in the spherical space ${\mathbb S}^2$. Let $c \in [a,p]$ and $c^\prime \in [b,p]$ such that $d(a,c)=d(b,c^\prime)=\pi/8$. Suppose also $A=[a,b]$ and $B=[c,c^\prime]$. To see that the mapping $T$ is not nonexpansive it is enough to note that $Tc=P_Ac$, $Tc^\prime=P_A c^\prime$, but $d(Tc,Tc^\prime)=d(P_Ac,P_A c^\prime)=d(a,b)=\pi/4 > d(c,c^\prime).$\\
Taking $B$ to be the whole positive octant, we obtain a similar example for the case $A \cap B \neq \emptyset$.
\end{ex}

However, for model spaces $M_k^n$ with $k>0$ we can prove the result below. Note that the existence of a convergent subsequence of $(x_n)$ already guarantees that $A \cap B \ne \emptyset$.

\begin{theorem} \label{th_conv_pos}
Let $k > 0$ and $n \in \mathbb{N}$. Suppose $A$ and $B$ are two nonempty closed and convex subsets of $M_k^n$. Let $C \subseteq M_k^n$ be nonempty convex with $\emph{diam}(C)<D_k/2$ such that $A, B \subseteq C$ and $R_AR_B(C) \subseteq C$. Let $x_0 \in C$ and $(x_n)$ be the sequence starting at $x_0$ generated by the $AAR$ method. Then,
\begin{itemize}
\item [(1)] Any convergent subsequence $(x_{n_k})$ of $(x_n)$ converges to a fixed point $x$ of the mapping $T$ and $P_Bx \in A \cap B$.
\item [(2)] The ``shadow'' sequence $(P_Bx_n)$ is bounded and each of its cluster points belongs to $A \cap B$.
\end{itemize}
\end{theorem}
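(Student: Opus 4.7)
The plan is to mimic the structure of Theorem \ref{th_conv_neg}, while coping with the fact that for $k>0$ global Busemann convexity fails: the assumptions $A,B\subseteq C$ and $\mathrm{diam}(C)<D_k/2$ are designed precisely to confine every iterate to a region where the model space still behaves like a CAT$(0)$ space for our purposes.

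First I would verify that $R_AR_B$ is a nonexpansive self-map of $C$. For any $x,y\in C$, since $B\subseteq C$ and $\mathrm{diam}(C)<D_k/2$, one has $\mathrm{dist}(x,B),\mathrm{dist}(y,B)<D_k/2$, so Proposition \ref{model} gives $d(R_Bx,R_By)\le d(x,y)$. The hypothesis $R_AR_B(C)\subseteq C$ ensures that $R_AR_Bx$ is defined for every $x\in C$, and a second application of Proposition \ref{model} (the relevant distances to $A\subseteq C$ staying below $D_k/2$ because the projections involved are well-defined) yields $d(R_AR_Bx,R_AR_By)\le d(R_Bx,R_By)\le d(x,y)$. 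Convexity of $C$ together with $R_AR_B(C)\subseteq C$ forces $Tx$, the midpoint of $x$ and $R_AR_Bx$, to lie in $C$; and the Busemann-type midpoint inequality valid on convex subsets of $M_k^n$ of diameter less than $D_k/2$ (a consequence of the spherical cosine law applied to small comparison triangles, see \cite{brha}) gives $d(Tx,Ty)\le\tfrac{1}{2}\bigl(d(x,y)+d(R_AR_Bx,R_AR_By)\bigr)\le d(x,y)$. Hence $T:C\to C$ is nonexpansive with bounded orbits.

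With this in place I would invoke the asymptotic regularity result already used in the proof of Theorem \ref{th_conv_neg}, namely \cite[Proposition 2]{goki*}, applicable in this Busemann-convex setting, to conclude $d(x_n,x_{n+1})=d(x_n,Tx_n)\to 0$. For part (1), given any convergent subsequence $x_{n_k}\to x^*$ one then has $x_{n_k+1}=Tx_{n_k}\to x^*$, and continuity of $T$ forces $Tx^*=x^*$. From $Tx^*=x^*$ we deduce $R_AR_Bx^*=x^*$, so
\[
P_Bx^*=\tfrac{x^*+R_Bx^*}{2}=\tfrac{R_AR_Bx^*+R_Bx^*}{2}=P_A(R_Bx^*)\in A,
\]
and therefore $P_Bx^*\in A\cap B$. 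For part (2), boundedness of $(P_Bx_n)$ is immediate from boundedness of $(x_n)\subseteq C$ and nonexpansivity of $P_B$. Given a cluster point $p$ of $(P_Bx_n)$, say $P_Bx_{n_k}\to p$, the properness of $M_k^n$ allows extracting a further subsequence $x_{n_{k_j}}\to x^*$, and continuity of $P_B$ (Proposition \ref{projection}) yields $p=P_Bx^*\in A\cap B$ via part (1).

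The main obstacle I anticipate is justifying the Busemann-type midpoint inequality used for nonexpansivity of $T$ together with the applicability of the asymptotic regularity theorem in this $k>0$ setting: both are local-to-global reductions that hinge crucially on the hypothesis $\mathrm{diam}(C)<D_k/2$, without which the preceding example shows $T$ can fail to be nonexpansive and the whole argument collapses.
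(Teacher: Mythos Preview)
Your proposal is correct and follows essentially the same route as the paper's proof: establish that the metric is convex on $C$ (your ``Busemann-type midpoint inequality'' is exactly what the paper invokes by noting that $\mathrm{diam}(C)<D_k/2$ makes the metric convex there), deduce nonexpansivity of $T$ and apply \cite[Proposition 2]{goki*} to get $d(x_n,x_{n+1})\to 0$, then pass to limits via continuity and properness. One small point: your appeal to Proposition~\ref{projection} for the continuity of $P_B$ is formally a CAT$(0)$ statement; the paper covers the $k>0$ case by the counterpart in \cite{esfe} mentioned just before that proposition.
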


\begin{proof}
(1) Since $\text{diam}(C)<D_k/2$, we know that the metric is convex on $C$. Reasoning as in the previous theorem, we get that $d(x_n,x_{n+1}) \to 0$ as $n \to \infty$. Let $(x_{n_k})$ be a convergent subsequence of $(x_n)$ and $x$ be its limit. Since $R_AR_B$ is continuous we have $R_AR_Bx_{n_k} \rightarrow R_AR_Bx$. Besides, $(x_{n_k+1})$ also tends to $x$, which implies that $x=Tx$. The fact that $P_B x \in A \cap B$ follows as in the proof of the previous theorem.\\
(2) Let $(P_Bx_{n_k})$ be a convergent subsequence of $(P_Bx_n)$ and $y$ be its limit. We may suppose that $(x_{n_k})$ is convergent (otherwise consider a convergent subsequence of it) and denote $\displaystyle x=\lim_{k\to \infty} x_{n_k}$. From the continuity of the metric projection $P_B$, we get that $y=P_Bx$ and we are done.
\end{proof}

A more general way of defining the sequence $(x_n)$ would be to use the Krasnoselski-Mann \cite{Mann53, Kra55} iteration starting at $x_0 \in X$:
\begin{equation} \label{eq_KM}
x_{n+1} = (1-\lambda_n)x_n + \lambda_n R_AR_B x_n,
\end{equation}
where $(\lambda_n) \subseteq [0,1]$.

Results similar to Theorems \ref{th_conv_neg}, \ref{th_conv_pos} hold for the sequence $(x_n)$ generated by (\ref{eq_KM}) when assuming for instance that $(\lambda_n)$ is divergent in sum and bounded away by $1$. In this case, consider $F=\text{Fix}(R_AR_B)\supseteq A\cap B $ and note that $(d(x_n,p))$ is decreasing for each $p \in F$. Apply \cite[Proposition 2]{goki*} as above to get that $d(x_n, R_AR_B x_n) \to 0$ (if $k<0$, use asymptotic center techniques similar to those considered in \cite[Proposition 6.3]{AriLeuLop12} to obtain the convergence of $(x_n)$). Notice that now one obtains convergence to a fixed point of the mapping $R_AR_B$ (see also \cite{DhoPan08}).

We finish this section by giving a rate of asymptotic regularity for the sequence $(x_n)$ generated by (\ref{eq_KM}). Using proof mining methods, Kohlenbach \cite{Koh01} and later Kohlenbach and Leu\c stean \cite{KohLeu03} computed exponential (in $1/\varepsilon)$ rates of asymptotic regularity for the Krasnoselski-Mann iteration in normed and hyperbolic spaces, respectively.  The next result which follows from \cite[Corollary 3.18]{KohLeu03} gives an explicit bound on the rate of asymptotic regularity for the sequence $(x_n)$.

\begin{theorem}
Let $k \in \mathbb{R}$ and $n \in \mathbb{N}$. Suppose $A$ and $B$ are two nonempty closed and convex subsets of $M_k^n$ with $A \cap B \ne \emptyset$. Let $C \subseteq M_k^n$ be nonempty convex and of diameter bounded above by $b \in (0,D_k/2)$ such that $A, B \subseteq C$ and $R_AR_B(C) \subseteq C$. Assume $K \in \mathbb{N}$, $K \ge 2$ and $\displaystyle (\lambda_n) \subseteq \left[\frac{1}{K}, 1-\frac{1}{K}\right]$. Then,
\[\forall x_0 \in X, \forall \varepsilon > 0, \forall n \ge \Phi(\varepsilon, b, K), \quad d(x_n, x_{n+1}) \le \varepsilon,\]
where
\[\Phi(\varepsilon, b,K) := KM\left\lceil2b e^{K(M+1)} \right\rceil,\]
with
\[M \ge \frac{(K-1)(1+2b)}{K\varepsilon}.\]
\end{theorem}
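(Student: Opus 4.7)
The plan is to reduce the statement to a direct application of the Kohlenbach--Leu\c stean rate of asymptotic regularity for Krasnoselski--Mann iterations of nonexpansive mappings, namely \cite[Corollary 3.18]{KohLeu03}. That corollary produces a bound of precisely the form $KM\lceil 2b\,e^{K(M+1)}\rceil$ whenever one iterates a nonexpansive self-map on a bounded convex subset of diameter $b$ in a W-hyperbolic space, with $\lambda_n\in[1/K,1-1/K]$ and $M$ as in the statement. The whole proof therefore amounts to verifying that the present setting falls into this quantitative framework.

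First I would check that $(C,d)$, together with the convex-combination operation $W(x,y,\lambda)=(1-\lambda)x+\lambda y$ coming from the unique geodesic between $x$ and $y$ in $M_k^n$, is a hyperbolic space in the Kohlenbach--Leu\c stean sense. For $k\le 0$ this is immediate because $M_k^n$ is Busemann convex and $C$ is convex. For $k>0$ the assumption $\mathrm{diam}(C)<D_k/2=\pi/(2\sqrt{k})$ places all of $C$ in a region of $M_k^n$ where geodesic segments between points of $C$ are unique and the metric is convex along them, which is exactly what is required by the W-hyperbolic axioms.

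Next I would verify the two assumptions on the iterated mapping. By Proposition \ref{model} both $R_A$ and $R_B$ are nonexpansive, hence so is $R_AR_B$; together with the hypothesis $R_AR_B(C)\subseteq C$ this exhibits $R_AR_B$ as a nonexpansive self-map of $C$. Any $p\in A\cap B$ satisfies $R_Ap=R_Bp=p$, so $p\in\mathrm{Fix}(R_AR_B)$ and the fixed-point set is nonempty. Finally, the iterates defined by (\ref{eq_KM}) remain in $C$ since $C$ is convex, $R_AR_B(C)\subseteq C$, and $x_{n+1}=W(x_n,R_AR_Bx_n,\lambda_n)$.

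The main delicate point is the W-hyperbolicity check in the $k>0$ case, where one must argue that Busemann-type convexity holds for every pair of geodesic segments whose endpoints lie in $C$; this is standard once $\mathrm{diam}(C)<D_k/2$, but it is the step that genuinely uses the diameter bound. With these ingredients, \cite[Corollary 3.18]{KohLeu03} applied to $R_AR_B$ on $C$ with $\lambda_n\in[1/K,1-1/K]$ yields $d(x_n,R_AR_Bx_n)\le\varepsilon$ for every $n\ge\Phi(\varepsilon,b,K)$, and since $d(x_n,x_{n+1})=\lambda_n\,d(x_n,R_AR_Bx_n)\le d(x_n,R_AR_Bx_n)$ the stated conclusion follows at once.
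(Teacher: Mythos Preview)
Your approach matches the paper's exactly: the paper simply states that the theorem follows from \cite[Corollary 3.18]{KohLeu03} and gives no further argument, so your reduction to that corollary together with the hypothesis checks is, if anything, more detailed than what the paper provides.

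One caveat on your verification for $k>0$: the assertion that ``Busemann-type convexity holds for every pair of geodesic segments whose endpoints lie in $C$'' once $\mathrm{diam}(C)<D_k/2$ is not correct. Positively curved model spaces fail Busemann convexity even on arbitrarily small convex sets (two geodesics issuing from a common point spread faster than linearly). What the diameter bound \emph{does} guarantee is the weaker metric convexity $d(z,(1-t)x+ty)\le(1-t)d(z,x)+td(z,y)$ for $x,y,z\in C$, and this, together with unique geodesics, is the ``hyperbolic-type'' structure actually used in \cite{KohLeu03}; so the reduction still applies and your overall argument stands.
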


Thus, the above rate only depends on $\varepsilon$, on an upper bound $b$ on the diameter of the set $C$ and on $(\lambda_n)$ via $K$. Rates of asymptotic regularity for the Krasnoselski-Mann iteration were further studied and improved by Leu\c stean \cite{Leu07} in the setting of uniformly convex hyperbolic spaces that admit a modulus of uniform convexity which decreases with respect to the radius. As a consequence, one gets a quadratic in $1/\varepsilon$ rate of asymptotic regularity in CAT$(0)$ spaces for constant $\lambda_n = \lambda \in (0,1)$ \cite[Corollary 19]{Leu07}. In particular, these results can be applied in the setting of model spaces $M_k^n$ with $k\le0$.

\section{Appendix: A related property of model spaces}
We have seen above that the reflection mapping with respect to nonempty closed and convex subsets is nonexpansive in the context of model spaces. The property we prove below implies in particular that the reflection mapping is nonexpansive in all model spaces $M_k^n$ with $k < 0$.

\begin{pro}
Let $k \in \mathbb{R}$, $k \ne 0$ and $n \in \mathbb{N}$. Consider $C \subseteq M_k^n$ with $\emph{diam}(C) < D_k$ and the points $x,y,x',y',a,b \in C$ for which
\[\displaystyle a = \frac{1}{2}x + \frac{1}{2}x' \quad \text{and} \quad \displaystyle b = \frac{1}{2}y + \frac{1}{2}y'.\]
Denote by $\gamma$ and $\gamma'$ the angles between the geodesic segments $[y,b]$ and $[b,a]$, $[x,a]$ and $[a,b]$. Suppose that
\begin{equation} \label{eq_appendix}
d(a,b) \le d\left((1-\lambda)x + \lambda a, (1-\lambda)y + \lambda b\right), \quad \text{for each }\lambda \in [0,1].
\end{equation}
If one of the following holds
\begin{itemize}
\item[\emph{(i)}] $d(x,a) = 0$,
\item[\emph{(ii)}] $d(x,a) = d(y,b)$,
\item[\emph{(iii)}] $d(x,a) < d(y,b)$ and ($\gamma \le \pi/2$ or $\gamma' \ge \pi/2$ for $k < 0$), ($\gamma \ge \pi/2$ or $\gamma' \le \pi/2$ for $k > 0$),
\end{itemize}
then $d(x',y') \le d(x,y)$.
\end{pro}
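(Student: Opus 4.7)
My plan is to reduce the inequality $d(x',y')\le d(x,y)$ to a trigonometric inequality in $r=d(x,a)$, $s=d(y,b)$, $c=d(a,b)$, $\gamma$, $\gamma'$, and then extract the necessary estimate from (\ref{eq_appendix}) via the first variation of arc length.

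The first step mimics the proof of Proposition \ref{model}. Because $x,a,x'$ and $y,b,y'$ are each collinear triples, pairs of angles at $a$ (resp.\ at $b$) formed with any third geodesic segment are supplementary. Applying the $k$-law of cosines at $a$ in the triangles with third vertex $y$ or $y'$, and at $b$ with third vertex $x$ or $x'$, then adding supplementary pairs, produces four identities of the form
\[
\cosh d(x,y)+\cosh d(x',y)=2\cosh r\,\cosh d(a,y),
\]
plus three analogues (with $y$ replaced by $y'$, and with the roles of $(a,x,x',r)$ and $(b,y,y',s)$ interchanged). A suitable linear combination eliminates $\cosh d(x,y')$ and $\cosh d(x',y)$ and leaves
\[
\cosh d(x,y)-\cosh d(x',y')=\cosh r\bigl[\cosh d(a,y)-\cosh d(a,y')\bigr]+\cosh s\bigl[\cosh d(b,x)-\cosh d(b,x')\bigr].
\]
Expanding each bracket with the law of cosines in $\triangle(a,b,y)$, $\triangle(a,b,y')$, $\triangle(a,b,x)$, $\triangle(a,b,x')$ collapses the right-hand side to $-2\sinh c\bigl[\cosh r\sinh s\cos\gamma+\sinh r\cosh s\cos\gamma'\bigr]$, so $d(x',y')\le d(x,y)$ becomes equivalent to
\begin{equation}\label{eq_target}
\cosh r\sinh s\cos\gamma+\sinh r\cosh s\cos\gamma'\le 0.
\end{equation}
For $k>0$ the same derivation with the spherical law of cosines produces the analogue with $\sin$, $\cos$ in place of $\sinh$, $\cosh$; all the trigonometric factors stay positive since $r,s\le\mathrm{diam}(C)/2<D_k/2$ and $c<D_k$.

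Next I use (\ref{eq_appendix}) to extract a scalar bound. The function $\lambda\mapsto d\bigl((1-\lambda)x+\lambda a,(1-\lambda)y+\lambda b\bigr)$ is smooth at $\lambda=1$ (the geodesic from $a$ to $b$ is unique since $d(a,b)<D_k$) and achieves its minimum on $[0,1]$ there, so its derivative at $\lambda=1$ is non-positive. The Riemannian first variation formula for arc length identifies this derivative with $r\cos\gamma'+s\cos\gamma$, giving
\begin{equation}\label{eq_fv}
r\cos\gamma'+s\cos\gamma\le 0.
\end{equation}

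It remains to deduce (\ref{eq_target}) from (\ref{eq_fv}) and one of (i)--(iii). Case (i) collapses (\ref{eq_fv}) to $s\cos\gamma\le 0$ and (\ref{eq_target}) to $\sinh s\cos\gamma\le 0$. Case (ii) turns (\ref{eq_fv}) into $\cos\gamma+\cos\gamma'\le 0$ and factors (\ref{eq_target}) as $\sinh r\cosh r(\cos\gamma+\cos\gamma')\le 0$. In case (iii), assuming $\gamma\le\pi/2$ (the alternative $\gamma'\ge\pi/2$ is handled symmetrically), I use (\ref{eq_fv}) to bound $\cos\gamma'\le-(s/r)\cos\gamma$, substitute into (\ref{eq_target}), and rewrite the result as $(\cos\gamma/r)\bigl[r\cosh r\sinh s-s\sinh r\cosh s\bigr]$. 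This is non-positive precisely because $t\mapsto t\coth t$ is strictly increasing on $(0,\infty)$, equivalent to $\sinh(2t)>2t$: the bracket is negative when $r<s$, and $\cos\gamma\ge 0$ closes the argument. The spherical case uses instead that $t\mapsto t\cot t$ is strictly decreasing on $(0,\pi/2)$, and this sign reversal is exactly why the conditions on $\gamma$ and $\gamma'$ in (iii) flip between $k<0$ and $k>0$. The main obstacle is this last case: matching the single inequality (\ref{eq_fv}) against the weighted target (\ref{eq_target}) under the asymmetric hypothesis $r<s$ only closes thanks to this precise monotonicity, which both dictates the sign conventions and explains the hyperbolic/spherical dichotomy.
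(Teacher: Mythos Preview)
Your proof is correct and reaches the same conclusion by a cleaner route than the paper. Both arguments ultimately rest on the same two ingredients: a law-of-cosines identity linking $\cosh d(x,y)-\cosh d(x',y')$ to $\gamma,\gamma'$, and the monotonicity of $t\mapsto t\coth t$ (equivalently of $\tanh t/t$). Where they differ is in how the hypothesis \eqref{eq_appendix} is converted into a usable scalar constraint. The paper introduces the intermediate points $x_\lambda,y_\lambda$, applies the law of cosines in several auxiliary triangles $\Delta(x_\lambda,y,y_\lambda)$, $\Delta(a,x,y)$, $\Delta(x_\lambda,x,y)$, etc., combines the resulting relations, invokes $\cosh d(x_\lambda,y_\lambda)\ge\cosh d(a,b)$, divides by $\sinh((1-\lambda)A)$ and lets $\lambda\to 1$. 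After simplification this limit computation yields exactly your inequality $r\cos\gamma'+s\cos\gamma\le 0$, which you obtain in one line from the Riemannian first variation formula applied at the endpoint $\lambda=1$. Your derivation of the \emph{exact} identity $\cosh d(x,y)-\cosh d(x',y')=-2\sinh c\bigl[\cosh r\sinh s\cos\gamma+\sinh r\cosh s\cos\gamma'\bigr]$ is also more transparent than the paper's chain of substitutions, and it makes the case analysis in (i)--(iii) immediate. The paper's approach has the virtue of staying entirely within the metric/trigonometric framework (no appeal to Riemannian variation), but your use of first variation is perfectly legitimate in $M_k^n$ and exposes the mechanism more directly.
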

\begin{proof}
We prove the result for ${\mathbb{H}}^n$, the proof for the other model spaces can be obtained by using the corresponding law of cosines.\\
For $\lambda \in (0,1)$, consider
\[x_\lambda = (1 - \lambda)x + \lambda a \quad  \text{and} \quad y_\lambda = (1-\lambda)y + \lambda b.\]
Denote $d(x,a) = A$ and $d(y,b) = B$.

(i): By the hyperbolic cosine law,
\[\cosh d(a,b) \le \cosh d(a, y_\lambda) = \cosh d(a,b)\cosh d(y_\lambda,b) - \sinh d(a,b)\sinh d(y_\lambda,b) \cos \gamma.\]
Thus,
\[\sinh d(a,b)\cos \gamma \le \frac{\cosh \left((1-\lambda)B\right) - 1}{\sinh \left((1-\lambda)B\right)}\cosh d(a,b).\]
Letting $\lambda \to 1$ we obtain that $\cos \gamma \le 0$. Using again the hyperbolic cosine law it follows that $\cosh d(a, y') \le \cosh d(a,y)$, hence, $d(a,y') \le d(a,y)$.

(ii) or (iii): Apply again the hyperbolic cosine law in the triangles $\Delta(a,y,b)$ and $\Delta(a,y',b)$ to obtain that
\begin{equation}\label{prop_S_eq1}
\cosh d(a,y) + \cosh d(a,y') = 2\cosh d(a,b) \cosh B.
\end{equation}
By the hyperbolic cosine law applied in the triangles $\Delta(x_\lambda,y, y_\lambda)$ and $\Delta(x_\lambda, y', y_\lambda)$,
\begin{equation} \label{prop_S_eq2}
\sinh \left((2-\lambda)B\right)\cosh d(x_\lambda, y) + \sinh \left(\lambda B\right) \cosh d(x_\lambda, y') = \sinh \left(2B\right)\cosh d(x_\lambda, y_\lambda).
\end{equation}
Using again the hyperbolic cosine law in the triangles $\Delta(a,x,y)$ and $\Delta(x_\lambda, x, y)$,
\[\frac{\cosh d(a,y) - \cosh d(x,y) \cosh A}{\sinh A} = \frac{\cosh d(x_\lambda, y) - \cosh d(x,y)\cosh \left(\lambda A\right)}{\sinh \left(\lambda A\right)},\]
from where
\[\sinh \left(\lambda A\right)\cosh d(a,y) + \sinh \left((1-\lambda)A\right)\cosh d(x,y) = \sinh A \cosh d(x_\lambda,y).\]
Applying the same reasoning in the triangles $\Delta(a, x', y')$ and $\Delta(x_\lambda, x', y')$ we obtain
\[\sinh \left((2-\lambda) A\right)\cosh d(a,y') = \sinh A\cosh d(x_\lambda,y') + \sinh \left((1-\lambda)A\right) \cosh d(x',y').\]
This implies
\begin{align*}
& \sinh \left(\lambda A\right) \sinh \left((2-\lambda)B\right) \left(\cosh d(a,y) +  \cosh d(a,y') \right)\\
& \ \ \ \  + \cosh d(a,y') \left(\sinh \left((2-\lambda) A\right) \sinh \left(\lambda B \right) - \sinh \left(\lambda A\right) \sinh \left((2-\lambda)B\right)\right) \\
& \ \ \ \ + \sinh \left((1-\lambda)A\right) \sinh \left((2-\lambda)B\right) \cosh d(x,y)\\
& \ \ = \sinh A \left( \sinh \left((2-\lambda)B\right)\cosh d(x_\lambda, y) + \sinh \left(\lambda B\right) \cosh d(x_\lambda, y') \right)\\
& \ \ \ \  + \sinh \left((1-\lambda)A\right) \sinh \left(\lambda B \right)\cosh d(x',y').
\end{align*}
Using (\ref{prop_S_eq1}) and (\ref{prop_S_eq2}) and the fact that $\cosh d(x_\lambda, y_\lambda) \ge \cosh d(a,b)$ we obtain
\begin{align*}
&2\cosh B \cosh d(a,b) \left(\sinh \left(\lambda A\right) \sinh \left((2-\lambda)B\right)  - \sinh A\sinh B\right)\\\
& \ \ \ \  +  \cosh d(a,y') \left(\sinh \left((2-\lambda) A\right) \sinh \left(\lambda B \right) - \sinh \left(\lambda A\right) \sinh \left((2-\lambda)B\right)\right) \\
& \ \ \ \ + \sinh \left((1-\lambda)A\right) \sinh \left((2-\lambda)B\right) \cosh d(x,y)\\
& \ \ \ge \sinh \left((1-\lambda)A\right) \sinh \left(\lambda B \right)\cosh d(x',y').
\end{align*}
Dividing by $\sinh \left((1-\lambda)A\right)$ and letting $\lambda \to 1$,
\begin{align*}
&2\left(\cosh B \cosh d(a,b) - \cos d(a,y')\right)\left(\frac{B}{A}\sinh A \cosh B  - \cosh A \sinh B\right)\\
& \ \ \ge \sinh B \left(\cosh d(x',y') - \cosh d(x,y)\right).
\end{align*}
Therefore,
\begin{align*}
2\sinh d(a,b)  \cos (\pi - \gamma)\left(\frac{B}{A}\sinh A \cosh B  - \cosh A \sinh B\right) \ge \cosh d(x',y') - \cosh d(x,y).
\end{align*}
In a similar way we obtain that
\begin{align*}
2\sinh d(a,b)  \cos (\pi - \gamma')\left(\frac{A}{B}\sinh B \cosh A  - \cosh B \sinh A\right) \ge \cosh d(x',y') - \cosh d(x,y).
\end{align*}
Since $\displaystyle x \mapsto \frac{\tanh x}{x}$ is non-increasing, the conclusion follows.\\
\end{proof}

Note that when $k=0$ the above property is immediate. In fact, in any Hilbert space, it is easy to see that condition (\ref{eq_appendix}) already implies that $d(x',y') \le d(x,y)$ (see also the so-called property (S) with $b=1$ studied in \cite{BruRei77} which is equivalent to this condition in the setting of normed spaces).

\section*{Acknowledgements}
Aurora Fern\' andez-Le\' on was partially supported by the Plan Andaluz de Investigaci\' on de la Junta de Andaluc\'ia FQM-127 and Grant P08-FQM-03543, and by MEC Grant MTM2009-10696-C02-01. Part of this work was carried out while she was visiting the Babe\c s-Bolyai University in Cluj-Napoca. She acknowledges the kind hospitality of the Department of Mathematics.

Adriana Nicolae was supported by a grant of the Romanian National Authority for Scientific Research, CNCS-UEFISCDI, project number PN-II-ID-PCE-2011-3-0383.

\end{document}